\newcommand{\no}{\nonumber}
\newcommand{\be}{\begin{equation}}
\newcommand{\ee}{\end{equation}}
\newcommand{\bi}{\begin{itemize}}
\newcommand{\ei}{\end{itemize}}
\newcommand{\br}{\begin{eqnarray}}
\newcommand{\er}{\end{eqnarray}}
\newcommand{\eps}{\varepsilon}
\renewcommand{\epsilon}{\varepsilon}
\newcommand{\norm}[1]{\lVert #1 \rVert}
\newcommand{\avg}[1]{\langle #1 \rangle}
\newcommand{\bigavg}[1]{\left \langle #1 \right \rangle}
\newcommand{\commentout}[1]{}
\newcommand{\B}{\mathbb{B}}
\renewcommand{\P}{\mathbb{P}}
\newcommand{\R}{\mathbb{R}}
\newcommand{\Z}{\mathbb{Z}}
\newcommand{\na}{\nabla}
\newcommand{\Ll}{\left}
\newcommand{\Rr}{\right}
\newcommand{\e}{\mathbf{e}}
\newcommand{\Ah}{A_{\mathsf{h}}}
\newcommand{\K}{\mathscr{K}_\xi}
\newcommand{\Qx}{\mathsf{Q}^{(\xi)}}
\newcommand{\ov}{\overline}
\newcommand{\un}{\underline}
\renewcommand{\d}{{\mathrm{d}}}
\newcommand{\xxi}{^{(\xi)}}
\newcommand{\cGh}{\mathcal{G}_{\mathsf{h}}}
\newcommand{\expec}[1]{\left\langle #1 \right\rangle}
\newcommand{\Cl}{\mathcal{C}^{\alpha}_{\mathrm{loc}}}
\newcommand{\var}{\mathbb{V}\mathrm{ar}}
\newtheorem{theo}{Theorem}[section]
\newtheorem{prop}[theo]{Proposition}
\newtheorem{lem}[theo]{Lemma}
\theoremstyle{definition}
\newtheorem{rmk}[theo]{Remark}
\newcommand{\Rm}{{\mathbb R}}
\newcommand{\Zm}{{\mathbb Z}}
\def\ZZint#1#2#3{{\setbox0=\hbox{$#1{#2#3}{\int}$}
\vcenter{\hbox{$#2#3$}}\kern-.5\wd0}}
\numberwithin{equation}{section}
\begin{document}

\title{Scaling limit of the corrector in stochastic homogenization}

\author[J.-C. Mourrat]{Jean-Christophe Mourrat}
\author[J. Nolen]{James Nolen}
\date{\today}
\address[Jean-Christophe Mourrat]{ Ecole normale supérieure de Lyon, CNRS, Lyon, France}
\email{jean-christophe.mourrat@ens-lyon.fr}
\address[James Nolen]{Department of Mathematics, Duke University, Durham, North Carolina, USA}
\email{nolen@math.duke.edu}
\begin{abstract}

In the homogenization of divergence-form equations with random coefficients, a central role is played by the corrector. We focus on a discrete space setting and on dimension $3$ and more. Completing the argument started in \cite{correl}, we identify the scaling limit of the corrector, which is akin to a Gaussian free field.

\bigskip

\noindent \textsc{MSC 2010:} 35B27, 35J15, 35R60, 82D30.

\medskip

\noindent \textsc{Keywords:} stochastic homogenization, scaling limit, Gaussian free field.

\end{abstract}
\maketitle


%
%
%
%
%
%
%
\section{Introduction}
\label{s:intro}

We consider a random conductance problem on $\mathbb{Z}^d$ associated with the discrete divergence-form operator $\na^* A \na$, where the coefficients of $A$ are independent, identically distributed random variables, bounded away from $0$ and infinity.  The main object of this paper is the stationary random corrector $\phi$, satisfying $\na^* A (\xi + \na \phi^{(\xi)}) = 0$ with $\xi \in \Rm^d$ being a fixed vector, $d \geq 3$. This random function $\phi$ plays a central role \cite{Kun, Biskup, glotto} in homogenenization theory for the operator $\na^* A \na$, a discrete analogue of the random elliptic operators considered in \cite{Koz, PV1}.  Our main result is that for $d \geq 3$ the appropriately rescaled corrector converges to a Gaussian field that has the homogeneity of a Gaussian free field.

To make this statement precise, we need to introduce some notation. We view $\Z^d$ as a graph with edges between nearest-neighbors, and we denote by $\B = \{ (x,y) \in \Z^d \times \Z^d \;|\; |x - y| = 1\}$ the set of (non-oriented) edges.  Let $(\e_1,\ldots,\e_d)$ be the canonical basis of $\Z^d$. For every edge $e \in \B$, there exists a unique pair $(\un{e},i) \in \Z^d \times \{1,\ldots d\}$ such that $e$ links $\un{e}$ to $\un{e} + \e_i$. Given such a pair, we write $\ov{e} = \un{e} + \e_i$. We call $\un{e}$ the \emph{base point} of the edge~$e$. For $f : \Z^d \to \R$, we let $\na f : \B \to \R$ be the gradient of $f$, defined by
$$
\na f(e) = f(\ov{e}) - f(\un{e}).
$$
We write $\na^*$ for the formal adjoint of $\na$, that is, for $F : \B \to \R$, $\na^*F : \Z^d \to \R$ is defined via
$$
(\na^* F)(x) = \sum_{i = 1}^d F((x-\e_i,x)) - F((x,x+\e_i)).
$$
Given a family $\{ a(e) \}_{e\in \B}$ of positive real numbers and a function $F : \B \to \R$, we let $A F(e) = a(e) F(e)$. This provides us with a precise definition of the operator $\na^* A \na$. 

In order to facilitate the derivation of the result (and match the assumptions made in \cite{correl} for the same reason), we make the simplifying assumption that the coefficients $\{ a(e) \}_{e \in \B}$ are constructed as follows: we give ourselves a family $\{ \zeta_e \}_{e \in \B}$ of independent standard Gaussian random variables, and define $a(e) := {\bf a}(\zeta_e)$, where ${\bf a}:\R \to \R$ is a twice differentiable function with bounded first and second derivatives, and taking values in a compact subset of $(0,\infty)$. We denote by $(\Omega,\P)$ the underlying probability space and probability measure, and by $\langle \cdot \rangle$ the associated expectation.

Gloria and Otto showed in \cite{glotto} that for $d \ge 3$, for every given $\xi \in \R^d$, there exists a random, stationary $\phi\xxi : \Z^d \to \R$, called the \emph{corrector}, satisfying
\begin{equation}
\label{e:def:corr}
\na^* A(\xi + \na \phi\xxi) = 0.
\end{equation}
Also, $\avg{|\phi\xxi(0)|^p} < \infty$ for all $p \geq 1$. If we interpret the random variables $\{a(e)\}_{e \in \B}$ as conductances, the function $x \cdot \xi + \phi\xxi(x)$ represents a potential with macroscopic gradient $\xi$, and the value $a(e) (\xi + \na \phi\xxi)(e)$ represents a current across edge $e$.  The effective conductivity of the network is the matrix $\Ah$ defined by
\[
\xi \cdot \Ah \xi = \sum_{e = (0,\e_i) } \langle (\xi + \na \phi\xxi)(e) a(e) (\xi + \na \phi\xxi)(e) \rangle.
\] 
Let $\cGh$ be the Green function of the continuum differential operator $-\na \cdot \Ah \na$. The main result of \cite{correl} is that there exists a $d \times d$ matrix $\Qx$ and a constant $C < \infty$ such that letting
$$
\K(x) := \int_{\Rm^d} \na \cGh(y) \cdot \Qx \, \na \cGh(x-y) \ \d y,
$$
we have
\begin{equation}
\label{e:struct}
\Ll| \expec{\phi\xxi(0) \ \phi\xxi(x)} - \K(x) \Rr| \le C \ \frac{1 \vee \log^2 |x|}{|x|^{d-1}} \qquad (x \in \Z^d).
\end{equation}
The matrix $\Qx$ is positive semi-definite, and $\Qx_{j,k}$ is defined in terms of $\nabla \phi\xxi$, $\nabla \phi^{({\bf e}_j)}$ and $\nabla \phi^{({\bf e}_k)}$.  We refer to \cite{correl} for a more precise description of the matrix $\Qx$. The function $\K(x)$ has the same homogeneity as the Green function: $\K(\lambda x) = \lambda^{2 - d} \K(x)$ for all $x \in \Rm^d$, $\lambda > 0$.

In the present article, we show that $\phi\xxi$ converges to a Gaussian field.  Following \cite{besov}, for any $\alpha < 0$, we denote by $\Cl = \Cl(\R^d)$ the (separable) local Hölder space with exponent of regularity~$\alpha$. The following theorem is our main result:


\begin{theo}[Scaling limit of the corrector]
\label{t:main} Recall that we assume $d \ge 3$, and let $\Phi\xxi_\eps$ be the (random) distribution defined by
\begin{equation}
\label{e:def:phieps}
\Phi\xxi_\eps(f) := \epsilon^{\frac{d}{2} + 1} \sum_{x \in \Zm^d} f(\epsilon x) \phi\xxi(x)
\end{equation}
for $f \in C^{\infty}_c$. For every $\alpha < 1-d/2$, the distribution $\Phi\xxi_\eps$ converges in law to $\ov \Phi\xxi$ as $\eps \to 0$ in the topology of $\Cl$, where $\ov \Phi\xxi$ is the Gaussian random field such that for every smooth, compactly supported function $f$, $\Phi\xxi(f)$ is a centred Gaussian with variance
\begin{equation}
\label{e:def:sigma}
\sigma^2(f) := \int f(x) \K(y-x) f(y) \, \d x \d y.
\end{equation}
\end{theo}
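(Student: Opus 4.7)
The plan is to establish separately (i) convergence of the finite-dimensional marginals $(\Phi\xxi_\eps(f_1),\dots,\Phi\xxi_\eps(f_n))$ to a centred Gaussian vector whose covariance is given by the bilinear form associated with $\sigma^2$, and (ii) tightness of the family $\{\Phi\xxi_\eps\}$ in $\Cl$. For the covariance, I would expand
\[
\expec{\Phi\xxi_\eps(f)\,\Phi\xxi_\eps(g)} \;=\; \eps^{d+2} \sum_{x,y \in \Zm^d} f(\eps x)\,g(\eps y)\,\expec{\phi\xxi(x)\phi\xxi(y)},
\]
use stationarity to rewrite $\expec{\phi\xxi(x)\phi\xxi(y)} = \expec{\phi\xxi(0)\phi\xxi(y-x)}$, and apply \eqref{e:struct} to substitute $\K(y-x)$. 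The error produced is $O(\eps|\log\eps|^2)$, since $\sum_{|z|\le R/\eps}(1\vee\log^2|z|)/|z|^{d-1}\lesssim \eps^{-1}|\log\eps|^2$. The main term is a Riemann-type sum: after the rescaling $u=\eps x$, $v=\eps y$ and using the $(2-d)$-homogeneity $\K(z/\eps)=\eps^{d-2}\K(z)$, it converges to $\int f(u)\K(v-u)g(v)\,\d u\,\d v$, giving the desired $\sigma^2$-covariance in the limit.

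Gaussianity of the limit is the main new ingredient. Since $\phi\xxi$ is a nonlinear functional of the independent standard Gaussian family $\{\zeta_e\}_{e \in \B}$, the natural tool is quantitative normal approximation of Stein--Malliavin type, e.g.\ Chatterjee's second-order Poincar\'e inequality, which controls the distance to the Gaussian by $L^p$-norms of the first and second Malliavin derivatives of $F=\Phi\xxi_\eps(f)$. Differentiating \eqref{e:def:corr} in $\zeta_e$ expresses $D_e\phi\xxi$ in terms of $\mathbf{a}'(\zeta_e)$, $(\xi+\na\phi\xxi)(e)$, and the gradient of the random Green function of $\na^* A \na$; a second differentiation yields an analogous representation for $D_{e,e'}\phi\xxi$. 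Substituting these into the Stein--Malliavin bound, the task reduces to annealed moment estimates on gradients of the random Green function and on $\na\phi\xxi$, available from the stochastic homogenization literature. Combined with the covariance computation, this yields $\Phi\xxi_\eps(f)\Rightarrow\mathcal{N}(0,\sigma^2(f))$ for each $f$; polarization and Cram\'er--Wold extend this to joint convergence.

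For tightness in $\Cl$ with $\alpha<1-d/2$, I would apply the moment criterion of \cite{besov}: it suffices to exhibit, for some $p$ large enough, a uniform-in-$\eps$ bound
\[
\expec{|\Phi\xxi_\eps(\eta^x_\lambda)|^{2p}}^{1/2p} \;\lesssim\; \lambda^{1-d/2}
\]
for rescaled test functions $\eta^x_\lambda=\lambda^{-d}\eta((\cdot-x)/\lambda)$ with $\lambda\in(0,1]$ and $x$ in a fixed compact. The $L^2$ content is the same computation as for $\sigma^2$ but performed at scale $\lambda$, yielding the correct $\lambda^{2-d}$ scaling via the $(2-d)$-homogeneity of $\K$; the passage to $L^{2p}$ uses hypercontractive estimates for functionals of Gaussian inputs together with the uniform moment bounds $\expec{|\phi\xxi(0)|^p}<\infty$ from \cite{glotto}. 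The main technical obstacle throughout is this Malliavin step: one must estimate the first and second derivatives of $\phi\xxi$ in the $\zeta_e$'s with sufficient decay for the relevant sums over edges to converge uniformly in $\eps$, which hinges on sharp annealed bounds for gradients of the random Green function and careful bookkeeping of the resulting error terms at small scales.
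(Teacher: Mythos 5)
Your proposal follows essentially the same route as the paper: the covariance is identified from \eqref{e:struct} by a Riemann-sum argument using the $(2-d)$-homogeneity of $\K$, Gaussianity of each $\Phi\xxi_\eps(f)$ is obtained from a second-order Poincar\'e/Stein--Malliavin inequality (Proposition \ref{prop:stein}) fed with the representation of $\partial_e\phi\xxi$ and $\partial_{e'}\partial_e\phi\xxi$ via the random Green function and the Marahrens--Otto annealed gradient bounds, and tightness is reduced via \cite{besov} to the uniform moment bound $\avg{|\Phi\xxi_\eps(f_\lambda)|^p}^{1/p}\lesssim\lambda^{1-d/2}$ of Proposition \ref{prop:moments}. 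The one place your sketch is softer than the paper is the passage to high moments in the tightness step: since $\phi\xxi$ is not a finite-chaos functional of $\{\zeta_e\}$, hypercontractivity does not apply directly, and the paper instead uses the exact scaling $\Phi_\eps(f_\lambda)=\lambda^{1-d/2}\Phi_{\eps/\lambda}(f)$ followed by an induction on dyadic moments driven by the spectral gap inequality and the bound $\avg{|\partial_e\phi(x)|^p}^{1/p}\lesssim(1+|x-e|)^{1-d}$.
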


In \cite{BB07} (Conjecture 5), it was conjectured that the appropriate scaling limit for the corrector is a Gaussian free field. While Theorem \ref{t:main} shows that the limit is a Gaussian field, the covariance structure can be different from the Gaussian free field, as explained in \cite{correl}.

From \eqref{e:struct}, one infers that for every smooth and compactly supported function $f$, one has
$$
\var\Ll[\Phi\xxi_\eps(f)\Rr] \xrightarrow[\eps \to 0]{} \sigma^2(f). 
$$
In order to prove Theorem~\ref{t:main}, it suffices to prove that (1) the fluctuations of $\Phi\xxi_\eps(f)$ are Gaussian, and that (2) the law of $\Phi\xxi_\eps$ is tight in $\Cl$. For the first part, we will in fact be more precise and give a rate of convergence:
\begin{prop}[Gaussian fluctuations]
\label{p:gauss}
Let $d_K$ denote the Kantorovich--Wasser\-stein distance
\[
d_K(X,W) = \sup  \{ \langle h(X) - h(W) \rangle \;|\; \norm{h'}_\infty  \leq 1 \},
\]
and let $Y$ be a standard Gaussian random variable. For every $f \in C^\infty_c$, we have
\be
\label{normapprox1}
d_{K}( \Phi_{\epsilon}(f), \sigma(f) Y) \xrightarrow[\eps \to 0]{} 0.
\ee
Moreover, if $\sigma(f) > 0$ and $\sigma_\epsilon = \sqrt{\var(\Phi_\epsilon(f))}$, then 
\be
d_{K}(\sigma_\epsilon^{-1} \Phi_\epsilon(f),  Y) \lesssim \epsilon^{d/2} \log|\epsilon| 
\label{normapprox2}
\ee
as $\epsilon \to 0$.
\end{prop}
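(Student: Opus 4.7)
My plan is to apply a quantitative normal approximation result for smooth Gaussian functionals, such as Chatterjee's second-order Poincar\'e inequality or a Stein--Malliavin bound in the spirit of Nourdin--Peccati. Since $a(e) = {\bf a}(\zeta_e)$ with ${\bf a}$ smooth and $\{\zeta_e\}$ i.i.d.\ standard Gaussians, the random variable $F := \Phi_\eps(f)$ is a smooth functional of this Gaussian family, and such results bound the Kantorovich--Wasserstein distance by
\[
d_K(\sigma_\eps^{-1} F, Y) \;\lesssim\; \sigma_\eps^{-2}\,\bigl\langle\|\na F\|_{\ell^2(\B)}^4\bigr\rangle^{1/4}\,\bigl\langle\|\na^2 F\|_{\mathrm{op}}^4\bigr\rangle^{1/4},
\]
where $(\na F)_e := \dr_{\zeta_e}F$ and $(\na^2 F)_{e,e'} := \dr_{\zeta_e}\dr_{\zeta_{e'}}F$. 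The proof then reduces to estimating these two Malliavin derivatives of $F$.

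Differentiating \eqref{e:def:corr} in $\zeta_e$ gives $\na^* A \na(\dr_{\zeta_e}\phi\xxi) = -\na^*\bigl({\bf a}'(\zeta_e)(\xi+\na\phi\xxi)(e)\mathbf{1}_{\{e\}}\bigr)$, and solving by the (random) Green function $G_A$ of $\na^* A \na$ yields
\[
\dr_{\zeta_e}\phi\xxi(x) \;=\; -{\bf a}'(\zeta_e)\,(\xi+\na\phi\xxi)(e)\,\na_2 G_A(x,\un{e}),
\]
where $\na_2$ denotes the gradient in the second variable along the edge $e$. Iterating produces an analogous formula for $\dr^2_{\zeta_e\zeta_{e'}}\phi\xxi$ involving an additional mixed gradient $\na\na G_A(\un{e},\un{e'})$ (plus a correction when $e=e'$). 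Summing against $\eps^{d/2+1}f(\eps\cdot)$ and using $\sum_x f(\eps x)\na_2 G_A(x,y) = \na u_\eps(y)$, where $u_\eps$ solves $\na^* A \na u_\eps = f(\eps\cdot)$ and satisfies $|\na u_\eps| = O(\eps^{-1})$ on $\eps^{-1}\,\mathrm{supp}(f)$, I obtain the moment estimates
\[
\bigl\langle|\dr_{\zeta_e}F|^p\bigr\rangle^{1/p} \lesssim \eps^{d/2},\qquad \bigl\langle|\dr^2_{\zeta_e\zeta_{e'}}F|^p\bigr\rangle^{1/p} \lesssim \eps^{d/2}\bigl(1\vee|\un{e}-\un{e'}|\bigr)^{-d}
\]
for $\un{e},\un{e'}\in\eps^{-1}\,\mathrm{supp}(f)$, with rapid decay outside this region.

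The first estimate summed over $e$ yields $\langle\|\na F\|_{\ell^2}^4\rangle^{1/4} = O(1)$, consistent with $\sigma_\eps^2 \to \sigma(f)^2$. The Hessian $(\dr^2_{\zeta_e\zeta_{e'}}F)$ is a discrete Calder\'on--Zygmund kernel of order $d$ supported in a box of side $\eps^{-1}$, and a discrete singular integral argument bounds its $\ell^2\to\ell^2$ operator norm by $\eps^{d/2}\,|\log\eps|$ (in any moment). Plugging these into the Stein--Malliavin inequality yields \eqref{normapprox2}; the unnormalized statement \eqref{normapprox1} then follows by the triangle inequality in $d_K$ and the variance convergence $\sigma_\eps \to \sigma(f)$, itself a consequence of \eqref{e:struct}.

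The principal technical obstacle is that the derivative formulas involve the \emph{random} Green function $G_A$, whose size must be controlled in averaged form. The required ingredients are \emph{annealed} bounds of the type $\langle|\na G_A(x,y)|^p\rangle^{1/p} \lesssim |x-y|^{1-d}$ and $\langle|\na\na G_A(x,y)|^p\rangle^{1/p} \lesssim |x-y|^{-d}\log^\kappa(2+|x-y|)$, of Marahrens--Otto and Delmotte--Deuschel type, as refined in the Gloria--Neukamm--Otto program. The logarithmic correction in the mixed-gradient bound is precisely what produces the $|\log\eps|$ factor in \eqref{normapprox2}, and passing from pointwise kernel estimates to a bound on the operator norm of the random Hessian---rather than its Hilbert--Schmidt norm, which would be too large by a factor of order $\eps^{-d/2}$---is the most delicate step.
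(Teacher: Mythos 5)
Your overall architecture — a second-order Poincar\'e/Stein--Malliavin bound combined with the Marahrens--Otto annealed Green function estimates and the Green-function representation of $\partial_e\phi$, $\partial_{e'}\partial_e\phi$ — is exactly the paper's strategy, and your moment estimates on the first and second derivatives of $F=\Phi_\eps(f)$ match what the paper derives. But there is a genuine gap at the single step you yourself flag as ``the most delicate'': the bound $\langle\|\na^2F\|_{\mathrm{op}}^4\rangle^{1/4}\lesssim\eps^{d/2}|\log\eps|$. The input you actually have is an \emph{entrywise annealed} bound, $\langle|\partial_e\partial_{e'}F|^p\rangle^{1/p}\lesssim\eps^{d/2}(1+|e-e'|)^{-d}$ on a box of side $\eps^{-1}$. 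A Schur test applied to this deterministic envelope does give $\eps^{d/2}|\log\eps|$, but the operator norm of the \emph{random} Hessian is not controlled by the operator norm of the matrix of its moments; you would need a pathwise row-sum bound, i.e.\ control of $\langle(\sup_e\sum_{e'}|\partial_e\partial_{e'}F|)^4\rangle$, and the supremum over the $\sim\eps^{-d}$ relevant rows costs at least a factor $\eps^{-d/(4q)}$ even using $q$-th moments. That salvages \eqref{normapprox1} and gives \eqref{normapprox2} with rate $\eps^{d/2-\delta}$ for every $\delta>0$, but not the stated rate, and in any case the argument is missing from your write-up. This is precisely the obstruction the paper's Proposition~\ref{prop:stein} is designed to remove: by rerunning Stein's method through the Helffer--Sj\"ostrand representation and applying Minkowski plus Cauchy--Schwarz, the operator norm is replaced by the quantity $\bigl(\sum_{e'}(\sum_e\langle|\partial_eF|^4\rangle^{1/4}\langle|\partial_{e'}\partial_eF|^4\rangle^{1/4})^2\bigr)^{1/2}$ in \eqref{steinsum}, which \emph{is} computable directly from the entrywise annealed bounds and yields $\eps^{d/2}|\log\eps|$ via Lemmas~\ref{lem:xesum} and~\ref{lem:eepsum}.

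Two smaller points. First, the $|\log\eps|$ does not come from a logarithmic correction in the annealed mixed second gradient of the Green function — the Marahrens--Otto bound used here is $\langle|\na\na G(e,e')|^p\rangle^{1/p}\lesssim(1+|e-e'|)^{-d}$ with no log; the logarithm arises from summing this critically decaying kernel over a box of side $\eps^{-1}$ (Lemma~\ref{lem:eepsum}). Second, \eqref{normapprox1} must also be proved when $\sigma(f)=0$, where your triangle-inequality reduction to \eqref{normapprox2} is unavailable; the paper disposes of that case by Chebyshev, $d_K(\Phi_\eps(f),0)\le\sqrt{\var(\Phi_\eps(f))}\to 0$.
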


Proposition \ref{p:gauss} is proved in Section \ref{sec:gauss}.  We then prove the following tightness result in Section~\ref{sec:tight}.

\begin{prop}[Tightness]
\label{p:tight}
For every $\alpha < 1- d/2$, the sequence of (random) distributions $\Ll(\Phi\xxi_\eps\Rr)_{\eps \in (0,1]}$ is tight in $\Cl$.
\end{prop}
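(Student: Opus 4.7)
My plan is to apply a Kolmogorov-type moment criterion for random distributions in local Hölder spaces, in the form recalled in \cite{besov}. Fix $\eta \in C^\infty_c(\R^d)$ supported in the unit ball, and set $\eta^\lambda_x(y) := \lambda^{-d}\eta((y-x)/\lambda)$. The criterion reduces tightness of $(\Phi\xxi_\eps)_{\eps \in (0,1]}$ in $\Cl$ for every $\alpha < 1-d/2$ to the uniform-in-$\eps$ moment bounds
\begin{equation}
\label{eq:plan:crit}
\sup_{\eps \in (0,1]}\; \sup_{x \in K,\, \lambda \in (0,1]} \lambda^{(d-2)p}\, \bigavg{\Ll|\Phi\xxi_\eps(\eta^\lambda_x) - \avg{\Phi\xxi_\eps(\eta^\lambda_x)}\Rr|^{2p}} < \infty,
\end{equation}
for every compact $K \subset \R^d$ and every $p = p(\alpha,d)$ large enough; the natural exponent $1-d/2$ is consistent with the homogeneity $\K(\mu\cdot) = \mu^{2-d}\K$ of the limiting covariance kernel.

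For $p=1$, the estimate \eqref{eq:plan:crit} follows directly from \eqref{e:struct}. Expanding the variance,
$$
\var\Ll[\Phi\xxi_\eps(\eta^\lambda_x)\Rr] = \eps^{d+2} \sum_{y,y' \in \Z^d} \eta^\lambda_x(\eps y)\, \eta^\lambda_x(\eps y')\, \mathrm{cov}\!\Ll(\phi\xxi(y), \phi\xxi(y')\Rr),
$$
the main contribution, obtained by replacing the covariance with $\K(y-y')$, is a Riemann sum for $\sigma^2(\eta^\lambda_x)$, which by the homogeneity of $\K$ and the $L^1$-normalization of $\eta^\lambda_x$ is of order $\lambda^{2-d}$. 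A direct lattice-point count shows the remainder from \eqref{e:struct} is smaller by a factor of $(\eps/\lambda)\log^2(\lambda/\eps)$, hence harmless whenever $\eps \lesssim \lambda$; the complementary regime $\eps > \lambda$ is trivial since $\eta^\lambda_x(\eps\cdot)$ then has $O(1)$ lattice points in its support.

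The main obstacle is to upgrade from variance to arbitrary-order moments uniformly in $\lambda$ and $\eps$. Here I would exploit that $\phi\xxi$ is a functional of the i.i.d.\ standard Gaussians $\{\zeta_e\}_{e \in \B}$, so $F := \Phi\xxi_\eps(\eta^\lambda_x)$ is amenable to Gaussian concentration on $(\Omega,\P)$. Iteration of the Gaussian Poincaré inequality, or equivalently hypercontractivity on Wiener chaos, controls $\bigavg{|F - \avg{F}|^{2p}}$ in terms of moments of the Malliavin gradient $DF = (\partial_{\zeta_e} F)_{e \in \B}$ in $\ell^2(\B)$. Differentiating \eqref{e:def:corr} in $\zeta_e$ expresses $\partial_{\zeta_e}\phi\xxi(y)$ through $\mathbf{a}'(\zeta_e)(\xi + \na\phi\xxi)(e)$ and a gradient of the random Green's function of $\na^* A \na$. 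Combining the quenched and annealed decay and integrability estimates for this Green's function, together with uniform $L^p$ bounds on $\na\phi\xxi$ (as provided by Gloria--Neukamm--Otto, Marahrens--Otto and related works), with the variance scaling of the previous step, delivers \eqref{eq:plan:crit} and hence the desired tightness. The delicate point is that after differentiation the integration against $\eta^\lambda_x$ must reproduce the correct $\lambda^{1-d/2}$ Gaussian scaling, uniformly in $\eps$, which requires in an essential way the quantitative $L^p$-theory of the corrector and of the random Green's function.
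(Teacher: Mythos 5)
Your proposal is correct in substance and lands on the same essential mechanism as the paper: reduce tightness to uniform moment bounds on $\Phi\xxi_\eps$ tested against rescaled functions (the criterion of \cite{besov}), and control those moments through the vertical derivatives $\partial_e \phi$, their Green-function representation, and the annealed estimates of Marahrens--Otto. There are, however, three points where the paper's route is simpler or where your plan needs tightening. First, the paper avoids your two-parameter analysis in $(\eps,\lambda)$ entirely by the exact scaling identity $\Phi_\eps(f_\lambda)=\lambda^{1-\frac{d}{2}}\Phi_{\eps/\lambda}(f)$, which makes the $\lambda^{1-d/2}$ factor automatic and reduces everything to the single bound $\avg{|\Phi_\eps(f)|^p}^{1/p}\lesssim 1$ uniformly in $\eps$ (Proposition \ref{prop:moments}); your ``delicate point'' about reproducing the correct $\lambda$-scaling after differentiation simply disappears. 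Second, your appeal to \eqref{e:struct} for the variance is a detour that is not needed: the gradient bound \eqref{dephi} together with Lemma \ref{lem:xesum} already gives $\sum_e\avg{|\partial_e\Phi_\eps(f)|^{2n}}^{1/n}\lesssim 1$ (using $2(d-1)>d$ for $d\ge 3$), and this controls all moments at once, including the variance. Third, be careful with ``or equivalently hypercontractivity on Wiener chaos'': $\phi\xxi$ is not of finite chaos order, so chaos hypercontractivity does not apply here; the correct tool is the iterated Poincar\'e route you mention first, which the paper implements in a self-contained way by induction on dyadic moments via the identity $\avg{\Phi_\eps(f)^{2n}}=\avg{\Phi_\eps(f)^{n}}^2+\var(\Phi_\eps(f)^n)$, the spectral gap inequality, H\"older, and Young's inequality. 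With these adjustments your argument goes through and is essentially the paper's.
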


\begin{rmk}
Theorem~\ref{t:main} can be reformulated as the joint convergence in law of $(\Phi^{(\e_1)}_\eps, \ldots, \Phi^{(\e_d)}_\eps)$ to a Gaussian vector field. Indeed, tightness for the product topology follows from the tightness of each of the coordinates. The limit law is then uniquely identified since Theorem~\ref{t:main} gives a characterisation of the limit law of every linear combination of $(\Phi^{(\e_1)}_\eps, \ldots, \Phi^{(\e_d)}_\eps)$. The covariance structure of the limiting field can be inferred by a polarization (with respect to $\xi$) of the left-hand side of \eqref{e:struct}.
\end{rmk}


\section{Proof of Proposition \ref{p:gauss}} \label{sec:gauss}

From now on, we drop the dependence on $\xi$ in the notation for simplicity, writing $\phi$ and $\Phi_\eps$ instead of $\phi\xxi$ and $\Phi\xxi_\eps$, respectively.

If $\sigma(f) = 0$, then the distribution of $\sigma(f)W$ is $\delta_0$. In this case, (\ref{normapprox1}) follows immediately from Chebyshev's inequality and the fact that $\avg{ \Phi_\epsilon(f)} = 0$:  for all Lipschitz functions $h$ with $\norm{h'} \leq 1$,
\br
|\avg{ h(\Phi_{\epsilon}(f))} - \avg{h(\sigma(f) W)}|  & = & |\avg{ h(\Phi_{\epsilon}(f))} - h(0) | \no \\
& \leq & \avg{ |h(\Phi_{\epsilon}(f)) - h(0)|} \no \\
& \leq & \norm{h'} \avg{ | \Phi_{\epsilon}(f)| } \leq \sqrt{\var(\Phi_{\epsilon}(f))}
\er
Hence,
\[
d_{K}(\Phi_{\epsilon}(f), \sigma(f)W) \leq \sqrt{\var(\Phi_{\epsilon}(f))} \to 0
\]
holds in this case.

So let us suppose that $\sigma(f) > 0$. We wish to prove (\ref{normapprox1}) and (\ref{normapprox2}).  Our proof will be based on the following proposition, which is a version of Theorem 2.2 in \cite{Ch3} and Theorem 3.1 (and Remark~3.6) of \cite{NP1}, stated in a form that is convenient for our purpose.

For a random variable $F \in L^2(\Omega)$ we say that $U = \partial_e F \in L^2(\Omega)$ is the weak derivative with respect to $\zeta_e$ (recall that $\zeta \sim N(0,1)$) if the following holds : for any finite subset $\Lambda \subset \B$ and any smooth, compactly supported function $\eta:\Rm^{|\Lambda|} \to \Rm$, we have
\[
\langle U \eta(\zeta) \rangle = \langle  F \zeta_e  \eta(\zeta) \rangle - \langle F \frac{\partial \eta}{\partial \zeta_e}(\zeta) \rangle,
\]
where $\eta(\zeta)$ depends only on $\{\zeta_{e'}\}_{e' \in \Lambda}$.

\begin{prop} \label{prop:stein}
Let $F  \in L^2(\Omega)$ be such that $\avg{F} = 0$, $\avg{F^2} = 1$. Assume also that $F$ has weak derivatives satisfying $\sum_{e} \avg{|\partial_e F|^4}^{1/2} < \infty$ and $\avg{|\partial_{e'} \partial_e F|^4} < \infty$ for all $e,e' \in \mathbb{B}$. Let $Y \sim N(0,1)$. Then
\be
\sup_{\norm{h'}_\infty \leq 1} \avg{h(F) - h(Y)}  \leq  \sqrt{\frac{5}{\pi}} \sqrt{\sum_{e'} \left( \sum_{e} \left \langle |\partial_e F |^4 \right \rangle^{1/4} \left \langle | \partial_{e'} \partial_e F|^4 \right \rangle^{1/4} \right)^2}. \label{steinsum}
\ee
\end{prop}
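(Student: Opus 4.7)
The plan is to combine Stein's method with the Malliavin calculus on Gaussian space, following the approach of Nourdin–Peccati. For a $1$-Lipschitz test function $h$, let $\Psi$ solve the Stein equation $\Psi'(x) - x\Psi(x) = h(x) - \langle h(Y)\rangle$; the classical bounds $\|\Psi'\|_\infty \le \sqrt{2/\pi}$ and $\|\Psi''\|_\infty \le 2$ on its solution reduce the task to estimating $|\langle \Psi'(F) - F\Psi(F)\rangle|$.

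The key ingredient is the covariance identity $\langle F G\rangle = \big\langle \langle DG, -DL^{-1}F\rangle_{\ell^2(\B)}\big\rangle$ for centred $F$ and sufficiently smooth $G$, where $D$ denotes the Malliavin derivative on $(\Omega,\P)$ (so that $(DF)_e = \partial_e F$ in the sense of the weak derivative defined above) and $L$ is the associated Ornstein–Uhlenbeck generator. Applying the chain rule with $G = \Psi(F)$ and pulling out the scalar factor $\Psi'(F)$ gives
$$\langle F\Psi(F)\rangle = \langle \Psi'(F) \cdot \Gamma\rangle, \qquad \Gamma := \sum_e (\partial_e F)(-\partial_e L^{-1}F),$$
and taking $\Psi(x)=x$ yields the normalisation $\langle \Gamma\rangle = \langle F^2\rangle = 1$. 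Consequently,
$$|\langle h(F) - h(Y)\rangle| = |\langle \Psi'(F)(1-\Gamma)\rangle| \le \|\Psi'\|_\infty\sqrt{\var(\Gamma)} \le \sqrt{2/\pi}\,\sqrt{\var(\Gamma)}.$$

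To control $\var(\Gamma)$, I would use the Mehler representation $-\partial_e L^{-1}F(\zeta) = \int_0^\infty e^{-t}\langle(\partial_e F)(\zeta(t))\mid\zeta\rangle\,dt$, with $\zeta(t) = e^{-t}\zeta + \sqrt{1-e^{-2t}}\zeta'$ and $\zeta'$ an independent copy of $\zeta$, and apply the Gaussian Poincaré inequality $\var(\Gamma) \le \sum_{e'}\langle(\partial_{e'}\Gamma)^2\rangle$. Differentiating $\Gamma$ in $\zeta_{e'}$, using $\partial_{e'}[(\partial_e F)(\zeta(t))] = e^{-t}(\partial_{e'}\partial_e F)(\zeta(t))$, expresses $\partial_{e'}\Gamma$ as a sum over $e$ (and an integral over $t$) of products of $\partial_e F$ and $\partial_{e'}\partial_e F$, each factor evaluated at either $\zeta$ or at the interpolated argument. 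Two successive Cauchy–Schwarz applications—first separating the two factors inside each summand, then in the sum over $e$—together with the fact that $\zeta(t)$ is marginally $\mathcal{N}(0,I)$ so that $L^4$-norms are preserved under the interpolation, yield
$$\langle(\partial_{e'}\Gamma)^2\rangle \le C\Big(\sum_e \langle|\partial_e F|^4\rangle^{1/4}\langle|\partial_{e'}\partial_e F|^4\rangle^{1/4}\Big)^2,$$
and summing over $e'$ produces the asserted bound.

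The main obstacle will be obtaining the sharp numerical constant $\sqrt{5/\pi}$ rather than some larger $C$: the factor $\sqrt{2/\pi}$ comes cleanly from the Stein step, but producing the matching $\sqrt{5/2}$ from the Poincaré step requires careful combinatorial bookkeeping through the $L^4$–$L^4$ Cauchy–Schwarz pairings (in particular, the two terms in $\partial_{e'}\Gamma$ must be combined optimally rather than bounded separately). This sharp bookkeeping is already carried out in \cite{Ch3, NP1}, whose results—transcribed to the weak-derivative conventions used here—are precisely the statement of the proposition.
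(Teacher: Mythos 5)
Your proposal is correct and follows essentially the same route as the paper: the Helffer--Sj\"ostrand representation $\mathrm{Cov}(G,F)=\sum_e\langle \partial_e G\,(\L+1)^{-1}\partial_e F\rangle$ used in Section \ref{sec:stein} is the same identity as your Nourdin--Peccati formula, since $-\partial_e L^{-1}F=(\L+1)^{-1}\partial_e F$ and the Mehler integral is just the explicit kernel of this resolvent, and the subsequent Poincar\'e-plus-Cauchy--Schwarz control of $\var(\Gamma)$ is exactly the paper's estimate of $\var(S)$ with $S=\sum_e \partial_e F(\L+1)^{-1}\partial_e F$. The only point where you overestimate the difficulty is the constant: no optimal combination of the two terms in $\partial_{e'}\Gamma$ is needed --- the paper bounds them separately via $(a+b)^2\le 2a^2+2b^2$, and the factor $\int_0^\infty e^{-2t}\,\d t=\tfrac12$ (equivalently, $\lVert(\L+2)^{-1}\rVert\le\tfrac12$) turns $2\cdot 1+2\cdot\tfrac14$ into $\tfrac52$, whence $\sqrt{2/\pi}\cdot\sqrt{5/2}=\sqrt{5/\pi}$.
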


A proof of Proposition \ref{prop:stein} is given later in Section \ref{sec:stein}. We note that except for the numerical constant, the same result holds if the weak derivatives are replaced by the so-called Glauber derivatives, hence providing a version of the result that applies to functions of independent random variables that are not necessarily Gaussian, see Remark~2.3 of \cite{correl}.

\vspace{0.2in}

We will apply this proposition to $F = \sigma_\epsilon^{-1} \Phi_\eps(f)$, where $\sigma_\epsilon = \sqrt{\var(\Phi_\eps(f))} \to \sigma(f) > 0$. Without loss of generality, we may assume that $f(x) = 0$ when $|x| > 1$. For any edges $e, e'$,
\[
\partial_e \Phi =  \epsilon^{\frac{d}{2} + 1}\sum_{x \in \Zm^d} f(\epsilon x) \partial_e \phi(x), 
\]
and 
\[
\partial_{e'} \partial_e \Phi =  \epsilon^{\frac{d}{2} + 1} \sum_{x \in \Zm^d} f(\epsilon x) \partial_{e'} \partial_e \phi(x). 
\]
We recall from \cite{glotto} that the stationary function $\phi$ is defined by a limit 
\[
\phi(0) = \lim_{\mu \to 0} \phi_\mu(0), \quad \text{in}\;\;L^p(\Omega)
\]
for every $p \geq 1$, where the stationary process $\phi_\mu(x)$ satisfies the regularized corrector problem
\[
-\mu \phi_\mu + \na^* A(\xi + \na \phi_\mu) = 0
\]
with $\mu > 0$. The random variable $\phi_\mu(0)$, with $\mu > 0$, may be regarded as a function of the random variables $\{\zeta_e\}$ (as a Borel measureable function on $\Rm^\B$), and in \cite{glotto} the weak derivatives $\partial_e \phi_\mu$ were shown to exist, coinciding with $\frac{\partial \phi_\mu}{\partial \zeta_e}$.  Although the limit $\phi(0) \in L^p(\Omega)$ is not defined as a function of the $\{\zeta_e\}$ (it is only defined on a set of probability one), the weak derivatives $\partial_e \phi$ and $\partial_{e} \partial_{e'} \phi$ may be shown to exist through the limit as $\mu \to 0$, in the usual way. (Refer to  Remark 4.8 of \cite{correl} for more about this point.)

For $e = [z,z+{\bf e_i}]$, $\partial_e \phi$ satisfies
\be
- \nabla^* \cdot a \nabla (\partial_e \phi) = \nabla^* \cdot (\partial_e a) (\nabla \phi + \xi) \label{DjphiPDE}
\ee
and thus
\[
\partial_e \phi(x) = - (\partial_e a(e)) \nabla_{e} G(x,e)(\nabla \phi(e) + \xi_e),
\]
where $G$ denotes the Green function associated with the operator $\na^* A \na$. We use $\nabla_e G(x,e)$ to denote $G(x,z +{\bf e_i}) - G(x,z)$ where $e = [z,z+{\bf e_i}]$. Similarly, if $e' = [y,y + {\bf e_j}]$, then $\partial_{e'} \partial_e \phi$ satisfies
\be
- \nabla^* \cdot a \nabla (\partial_{e'} \partial_e \phi) = \nabla^* \cdot (\partial_{e'} \partial_e a) (\nabla \phi + \xi) + \nabla^* \cdot (\partial_e a(e)) (\nabla \partial_{e'} \phi) +  \nabla^* \cdot (\partial_{e'} a(e)) (\nabla \partial_{e} \phi). 
\ee
For $e \neq e'$, $\partial_{e'} \partial_e a = 0$, and we have
\br
\partial_{e'} \partial_e \phi(x) & = & - (\partial_e a(e)) \nabla_{e} G(x,e)(\nabla_{e} \partial_{e'} \phi(z) )  - (\partial_{e'} a(e')) \nabla_{e'} G(x,e')(\nabla_{e'} \partial_{e} \phi(y) ) \no \\
& = &  (\partial_e a(e))(\partial_{e'} a(e')) \nabla_{e} G(x,e)( \nabla_{e}  \nabla_{e'} G(e,e')(\nabla \phi(e') + \xi_{e'}) ) \no \\
& & + (\partial_e a(e))(\partial_{e'} a(e')) \nabla_{e'} G(x,e')( \nabla_{e'}  \nabla_{e} G(e',e)(\nabla \phi(e) + \xi_e) ),
\er
while for $e' = e$, we have
\br
\partial_e^2 \phi(x) & = &  2 (\partial_e a(e))^2  \nabla_{e} G(x,e)( \nabla_{e}  \nabla_{e} G(e,e)(\nabla_{e} \phi(e) + \xi_e) ) \no \\
&  &  -  (\partial_e^2 a(e))  \nabla_{e} G(x,e)( \nabla_{e} \phi(e) + \xi_e ). 
\er
Therefore, applying the generalized H\"older inequality and the fact that $|\partial_e a| = |{\bf a}'(\zeta_e)| \lesssim 1$ and $|\partial_e^2 a| = |{\bf a}''(\zeta_e)| \lesssim 1$
\br
\bigavg{ |\partial_e \partial_{e'} \phi(x)|^p }^{1/p} & \lesssim & \bigavg{ | \nabla_{e} G(x,e)|^{3p} }^{\frac{1}{3p}} \bigavg{| \nabla_{e} \nabla_{e'} G(e,e')|^{3p}}^{\frac{1}{3p}} \bigavg{ |\nabla \phi(e') + \xi|^{3p}}^{\frac{1}{3p}}  \no \\
& & + \bigavg{ | \nabla_{e} G(x,e')|^{3p} }^{\frac{1}{3p}} \bigavg{| \nabla_{e'} \nabla_{e} G(e',e)|^{3p}}^{\frac{1}{3p}} \bigavg{ |\nabla \phi(e) + \xi|^{3p}}^{\frac{1}{3p}}.  \label{dedephia}
\er
Also,
\br
\bigavg{ |\partial_e \phi(x)|^p }^{1/p} & \lesssim & \bigavg{ | \nabla_{e} G(x,e)|^{2p} }^{\frac{1}{2p}} \bigavg{ |\nabla \phi(x) + \xi|^{2p}}^{\frac{1}{2p}}. \label{dephib}
\er

Gloria and Otto \cite{glotto} proved that for all $p \geq 1$,
\[
\avg{ |\nabla \phi(e) + \xi|^p}^{1/p} \lesssim 1.
\]
Marahrens and Otto \cite{MO1} proved that the Green function satisfies
\[
\bigavg{|\nabla_e G(0,e)|^p}^{1/p} \lesssim \frac{1}{(1 + |e|)^{d - 1}}, 
\]
and
\[
\bigavg{|\nabla_{e'} \nabla_e G(e',e)|^p}^{1/p} \lesssim \frac{1}{(1 + |e - e'|)^{d}}
\]
for all $p \geq 2$.  By combining these crucial estimates with (\ref{dedephia}) and (\ref{dephi}), we obtain
\br
\bigavg{ |\partial_e \partial_{e'} \phi(x)|^p }^{1/p} & \lesssim & \left( \frac{1}{(1 + |x - e|)^{d - 1}}  +  \frac{1}{(1 + |x - e'|)^{d - 1}} \right)\cdot \frac{1}{(1 + |e - e'|)^{d}}, \label{dedephi}
\er
and
\br
\bigavg{ |\partial_e \phi(x)|^p }^{1/p} & \lesssim & \frac{1}{(1 + |x - e|)^{d - 1}}. \label{dephi}
\er
Applying (\ref{dephi}) and (\ref{dedephi}) to $\partial_e \Phi$ and $\partial_{e'} \partial_e \Phi$, and recalling that $f(x) = 0$ when $|x| > 1$, we have
\br
\epsilon^{- \frac{d}{2} - 1} \bigavg{|\partial_e \Phi|^p}^{1/p}  & \lesssim & \sum_{|x| \leq \epsilon^{-1}} \bigavg{  | \partial_e \phi(x))|^p}^{1/p} \no \\
& \lesssim & \sum_{|x| \leq \epsilon^{-1}} \frac{1}{(1 + |x - e|)^{d - 1}},  \label{dGammap}
\er
and
\br
\epsilon^{- \frac{d}{2} - 1}  \bigavg{|\partial_e \partial_{e'} \Phi|^p}^{1/p}  & \lesssim & \sum_{|x| \leq \epsilon^{-1}} \bigavg{  | \partial_e \partial_{e'} \phi(x))|^p}^{1/p}   \no \\
&  \lesssim &  \sum_{|x| \leq \epsilon^{-1}} \left( \frac{1}{(1 + |x - e|)^{d - 1}}  +  \frac{1}{(1 + |x - e'|)^{d - 1}} \right)\cdot \frac{1}{(1 + |e - e'|)^{d}}. \label{dGammap2}
\er

Now we are prepared to apply Proposition \ref{prop:stein}. The fact that $\sum_{e} \avg{|\partial_e \Phi|^4}^{1/2} < \infty$ and $\avg{|\partial_{e'} \partial_e \Phi|^4} < \infty$ follows from (\ref{dGammap}), (\ref{dGammap2}), and the following lemma:

\begin{lem} \label{lem:xesum}
For all $e \in \mathbb{Z}^d$ and $\epsilon \in (0,1]$.
\br
 \sum_{|x| \leq \epsilon^{-1}} \frac{1}{(1 + |x - e|)^{d - 1}}  \lesssim \frac{\epsilon^{-1}}{(1 + |\epsilon e|)^{d-1}}. \label{xesum}
\er

\end{lem}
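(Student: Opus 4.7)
My plan is to split the proof by whether $e$ is near or far from the summation domain $\{|x|\le\epsilon^{-1}\}$, with the dichotomy placed at $|\epsilon e| \simeq 1$. Both regimes are handled by elementary estimates, with the content being in verifying that the two cases match the claimed bound.

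In the near regime $|\epsilon e| \le 2$, the right-hand side of \eqref{xesum} is of order $\epsilon^{-1}$, so it suffices to prove $\sum_{|x|\le\epsilon^{-1}}(1+|x-e|)^{-(d-1)} \lesssim \epsilon^{-1}$. After the change of summation variable $y = x-e$, the new index set is contained in the ball of radius $\epsilon^{-1}+|e| \le 3\epsilon^{-1}$, and the required bound reduces to the elementary lattice estimate $\sum_{|y|\le R, \, y \in \Z^d}(1+|y|)^{-(d-1)} \lesssim R$ (applied with $R = 3\epsilon^{-1}$). This in turn follows from a comparison with $\int_0^R r^{d-1}(1+r)^{-(d-1)} \, \d r$, whose integrand is bounded on $[0,R]$.

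In the far regime $|\epsilon e| > 2$, one has $|e| \ge 2\epsilon^{-1} \ge 2|x|$ for every $x$ in the domain, so the reverse triangle inequality gives $|x - e| \ge |e|/2$. Each summand is then $\lesssim (1+|e|)^{-(d-1)}$; since the number of lattice points with $|x|\le\epsilon^{-1}$ is $\lesssim \epsilon^{-d}$, the full sum is $\lesssim \epsilon^{-d}(1+|e|)^{-(d-1)}$. On the other side, $(1+|\epsilon e|)^{d-1} \simeq (\epsilon|e|)^{d-1}$ in this regime, so $\epsilon^{-1}(1+|\epsilon e|)^{-(d-1)} \simeq \epsilon^{-d}|e|^{-(d-1)}$, which matches the upper bound just obtained.

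There is no genuine obstacle here: the only nontrivial input is the elementary lattice-to-integral comparison behind $\sum_{|y|\le R}(1+|y|)^{-(d-1)} \lesssim R$, and once it is in hand both regimes are disposed of by triangle-inequality bookkeeping together with the matching of $(1+|\epsilon e|)^{d-1}$ with $(\epsilon |e|)^{d-1}$ in the far case.
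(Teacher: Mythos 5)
Your proof is correct and follows essentially the same route as the paper: a dichotomy at $|e| \simeq \epsilon^{-1}$, with the far regime handled by the reverse triangle inequality $|x-e|\ge |e|/2$ together with counting the $\lesssim \epsilon^{-d}$ lattice points, and the near regime by recentering at $e$ and comparing the sum over a ball of radius $\lesssim \epsilon^{-1}$ with the corresponding integral. No issues.
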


By the estimates above, we have
\br
&&   \epsilon^{- d - 2} \sum_{e} \bigavg{| \partial_{e} \partial_{e'} \Phi|^4}^{1/4} \bigavg{|\partial_e \Phi|^4}^{1/4}   \no \\
& & \quad   \leq \sum_{e}  \left( \sum_{|x| \leq \epsilon^{-1}} \left( \frac{1}{(1 + |x - e|)^{d - 1}}  +  \frac{1}{(1 + |x - e'|)^{d - 1}} \right)\cdot \frac{1}{(1 + |e - e'|)^{d}} \right) \left(   \sum_{|x| \leq \epsilon^{-1}} \frac{1}{(1 + |x - e|)^{d - 1}}\right)    \no \\
& & \quad   = \sum_{e} \frac{1}{(1 + |e - e'|)^{d}}  \left( \sum_{|x| \leq \epsilon^{-1}} \left( \frac{1}{(1 + |x - e|)^{d - 1}}  +  \frac{1}{(1 + |x - e'|)^{d - 1}} \right) \right) \left(   \sum_{|x| \leq \epsilon^{-1}} \frac{1}{(1 + |x - e|)^{d - 1}}\right). \no
\er

\vspace{0.2in}

Therefore,
\br
&& \epsilon^{- d - 2} \sum_{e} \bigavg{| \partial_{e} \partial_{e'} \Phi|^4}^{1/4} \bigavg{|\partial_e \Phi|^4}^{1/4} \no \\
&& \quad \quad \quad  \leq \epsilon^{-2} \sum_{e} \frac{1}{(1 + |e - e'|)^{d}} \left( \frac{1}{(1 + |\epsilon e|)^{d-1}} + \frac{1}{(1 + |\epsilon e'|)^{d-1}}  \right) \frac{1}{(1 + |\epsilon e|)^{d-1}}. \label{d2phisum}
\er

\vspace{0.2in}

\begin{lem} \label{lem:eepsum}
Let $p > 0$. For all $e' \in \mathbb{Z}^d$ and $\epsilon \in (0,1/2]$,
\br
\sum_{e} \frac{1}{(1 + |e - e'|)^{d}} \frac{1}{(1 + |\epsilon e|)^p} \lesssim \frac{|\log \epsilon|}{(1 + |\epsilon e'|)^d} + \frac{  |\log \epsilon| }{(1 + |\epsilon e'|)^p}. \label{eepsum}
\er
\end{lem}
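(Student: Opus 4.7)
The plan is to reduce the discrete sum to a continuum integral via a standard sum-to-integral comparison, and then bound the integral via a three-region decomposition. Setting $y := \epsilon e'$ and performing the change of variables $z := \epsilon e$, the summand $(1+|e - e'|)^{-d}(1+|\epsilon e|)^{-p}$ is comparable, per unit volume of $e$, to $(\epsilon + |z-y|)^{-d}(1+|z|)^{-p}$; a routine comparison of the sum with its Riemann integral (the integrand varies by at most a constant factor on unit cubes of $e$, and the singular region $|e - e'| \lesssim 1$ is tamed by the $\epsilon$-shift) reduces the task to establishing that
\[
I(y) := \int_{\Rm^d} \frac{\d z}{(\epsilon + |z-y|)^d (1 + |z|)^p} \lesssim \frac{|\log \epsilon|}{(1+|y|)^d} + \frac{|\log \epsilon|}{(1+|y|)^p}.
\]

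I would decompose $\Rm^d$ into three pieces: (i) the inner ball $\{|z - y| \leq (1+|y|)/2\}$, (ii) the outer near-origin annulus $\{|z - y| > (1+|y|)/2\} \cap \{|z| \leq 2(1+|y|)\}$, and (iii) the exterior $\{|z| > 2(1+|y|)\}$. On (i), $1+|z| \asymp 1+|y|$, so the second factor pulls out as $(1+|y|)^{-p}$ and the remaining radial integral $\int_0^{(1+|y|)/2} r^{d-1}(\epsilon+r)^{-d}\,\d r$ is of order $\log((1+|y|)/\epsilon)$, contributing a term of order $|\log \epsilon|/(1+|y|)^p$. On (ii), $(\epsilon + |z-y|)^{-d} \lesssim (1+|y|)^{-d}$, and the remaining integral $\int_{|z|\leq 2(1+|y|)} (1+|z|)^{-p}\,\d z$ is $O(1)$, $O(\log(1+|y|))$, or $O((1+|y|)^{d-p})$ according as $p > d$, $p = d$, or $p < d$; multiplying by $(1+|y|)^{-d}$ produces a quantity absorbed into $|\log \epsilon|[(1+|y|)^{-d} + (1+|y|)^{-p}]$. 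On (iii), $|z - y| \asymp |z|$, so the integrand is $\lesssim (1+|z|)^{-d-p}$, which integrates to $O((1+|y|)^{-p})$.

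Combining the three contributions produces the claimed bound. The main technical subtlety will be managing the logarithms: region (i) generates a factor $\log((1+|y|)/\epsilon) = |\log \epsilon| + \log(1+|y|)$, and region (ii) at the critical exponent $p = d$ introduces a factor $\log(1+|y|)$; each of these must be absorbed into the $|\log \epsilon|$ prefactor on the right-hand side. This is the step that needs to be verified carefully, and it is where the discreteness of the sum together with the implicit range of $|y|$ relevant in the sequel (cf.\ \eqref{d2phisum}) plays a role.
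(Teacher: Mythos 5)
Your reduction to the integral $I(y)=\int_{\Rm^d}(\epsilon+|z-y|)^{-d}(1+|z|)^{-p}\,\d z$ and the three-region decomposition are sound, and the approach is essentially the paper's: the paper also passes to a rescaled integral (after first disposing of the range $|e|<\epsilon^{-1}$ by bounding the factor $(1+|\epsilon e|)^{-p}$ by $1$, which produces the $|\log\epsilon|(1+|\epsilon e'|)^{-d}$ term directly) and then decomposes according to the position of $z$ relative to the singularity at $w=\epsilon e'$ and to the origin. Your organization is arguably cleaner in that it treats the two factors symmetrically from the start, but the two arguments are interchangeable.

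The logarithm issue you flag at the end, however, is not a loose end you can close by being more careful: it is a genuine defect of the statement \eqref{eepsum}, and the paper's own proof has exactly the same leftover. Your region (i) produces $\log((1+|y|)/\epsilon)\,(1+|y|)^{-p}=(|\log\epsilon|+\log(1+|y|))(1+|y|)^{-p}$; the paper's sub-region $\{|z|<3|w|,\ |z-w|\ge 1\}$ likewise produces $\log(1+|w|)\min(1,|w|^{-p})$, which is then silently dropped. When $p\le d$ this term is \emph{not} dominated by the right-hand side of \eqref{eepsum} uniformly in $e'$: take $d=3$, $p=1$, $\epsilon=1/2$ and $|e'|=N\to\infty$; the portion of the sum with $|e-e'|\le N/2$ already contributes $\gtrsim N^{-1}\log N$, while the right-hand side is $O(N^{-1})$. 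Neither the discreteness of the sum nor any restriction in the lemma statement rescues this, since $e'$ ranges over all of $\Z^d$. The correct fix is to add a factor $\log(e+|\epsilon e'|)$ to the right-hand side (equivalently, to restrict to $|e'|\le\epsilon^{-C}$ for a fixed $C$); this costs nothing downstream, because in \eqref{d2phisum} the bound is squared and summed over $e'$, and $\sum_{e'}\log^2(e+|\epsilon e'|)(1+|\epsilon e'|)^{-q}\lesssim\epsilon^{-d}$ for the relevant exponents $q>d$ just as without the logarithm. So your proof is as complete as the paper's; state the corrected bound and check the application, and you are done.
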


\vspace{0.2in}

Using Lemma \ref{lem:eepsum}, we conclude that
\br
\sum_{e} \frac{1}{(1 + |e - e'|)^{d}} \frac{1}{(1 + |\epsilon e|)^{2(d-1)}} & \lesssim& \frac{|\log \epsilon|}{(1 + |\epsilon e'|)^d}  + \frac{|\log \epsilon|}{(1 + |\epsilon e'|)^{2(d-1)}} \lesssim  \frac{|\log \epsilon|}{(1 + |\epsilon e'|)^d} \no
\er
and
\br
\sum_{e} \frac{1}{(1 + |e - e'|)^{d}}  \frac{1}{(1 + |\epsilon e|)^{d-1}} & \lesssim &  \frac{|\log \epsilon|}{(1 + |\epsilon e'|)^d}  + \frac{|\log \epsilon|}{(1 + |\epsilon e'|)^{d-1}} \lesssim \frac{|\log \epsilon|}{(1 + |\epsilon e'|)^{d-1}}. \no
\er
These bounds combined with (\ref{d2phisum}) imply that
\br
 \epsilon^{- 2d - 4}  \sum_{e'}  \left( \sum_{e} \bigavg{| \partial_{e} \partial_{e'} \Phi|^4}^{1/4} \bigavg{|\partial_e \Phi|^4}^{1/4} \right)^2  & \lesssim & \epsilon^{-4} \sum_{e'}  \left( \frac{|\log \epsilon|}{(1 + |\epsilon e'|)^d}  \right)^2 \no \\
& & + \epsilon^{-4} \sum_{e'} \frac{1}{(1 + |\epsilon e'|)^{2(d-1)}} \left( \frac{|\log \epsilon|}{(1 + |\epsilon e'|)^{d-1}} \right)^2 \no \\
&  \lesssim & \epsilon^{-4-d}(\log \epsilon)^2. 
\er

In view of this estimate, we see that the random variable $F_\epsilon = \sigma_\epsilon^{-1} \Phi_\epsilon$ satisfies
\[
 \sum_{e'}  \left( \sum_{e} \bigavg{| \partial_{e} \partial_{e'} F_\epsilon|^4}^{1/4} \bigavg{|\partial_e F_\epsilon|^4}^{1/4} \right)^2  \lesssim \sigma_\epsilon^{-4} \epsilon^d (\log \epsilon)^2.
\]
By Proposition \ref{prop:stein}, we conclude that
\[
\sup_{\norm{h'}_\infty \leq 1} \avg{h(F_\epsilon) - h(Y)} \lesssim  \sigma_\epsilon^{-2} \epsilon^{d/2} \log \epsilon  \lesssim  \epsilon^{d/2} \log \epsilon
\]
as $\epsilon \to 0$, since $\lim_{\epsilon \to 0} \sigma_{\epsilon} = \sigma > 0$ in this case.  This proves (\ref{normapprox2}).  Finally,
\br
\avg{h(\Phi_\epsilon) - h(\sigma W)} & = & \avg{h( \Phi_\epsilon) - h(\sigma_\epsilon W)} + \avg{ h(\sigma_\epsilon W)- h(\sigma W)} \no \\
& = & \sigma_\epsilon \avg{\hat h_\epsilon(\sigma_\epsilon^{-1} \Phi_\epsilon) - \hat h_\epsilon( W)} + \avg{ h(\sigma_\epsilon W)- h(\sigma W)} \no \\
& = & \sigma_\epsilon \avg{\hat h_\epsilon(F_\epsilon) - \hat h_\epsilon(W)} + \avg{ h(\sigma_\epsilon W)- h(\sigma W)}
\er
where $\hat h_{\epsilon}(\cdot ) = \sigma_{\epsilon}^{-1} h(\sigma_{\epsilon} \cdot)$ also satisfies $\norm{\hat h_\epsilon'}_\infty \leq 1$. Hence
\[
\sup_{\norm{h'}_\infty \leq 1} \avg{h(\Phi_\epsilon) - h(\sigma W)} \lesssim \epsilon^{d/2} \log \epsilon + |\sigma_\epsilon - \sigma|, 
\]
which implies that (\ref{normapprox1}) holds, as well.
\vspace{0.5in}

\commentout{

Next, we estimate $\kappa_0$.  By (\ref{dGammap}) and (\ref{xesum}), we see that
\[
\epsilon^{-d - 2} \kappa_0 \leq  \epsilon^{-2} \left( \sum_e \left( \frac{1}{(1 + |\epsilon e|)^{d-1} } \right)^4 \right)^{1/2}.
\]
Now
\br
\sum_e \left( \frac{1}{(1 + |\epsilon e|)^{d-1} } \right)^4  & = & \sum_{|e| < \epsilon^{-1}} \left( \frac{1}{(1 + |\epsilon e|)^{d-1} } \right)^4  + \sum_{|e| \geq \epsilon^{-1}}  \left( \frac{1}{(1 + |\epsilon e|)^{d-1} } \right)^4 \no \\
& \lesssim & \epsilon^{-d} + \sum_{|e| \geq \epsilon^{-1}}  \left( \frac{1}{|\epsilon e|^{d-1} } \right)^4 \lesssim \epsilon^{-d}
\er
Hence $\kappa_0 \lesssim \epsilon^{\frac{d}{2}}$.

Combining these estimates with the fact that
\[
\lim_{\epsilon \to 0} \var(\Phi_\epsilon(f)) = \sigma(f) > 0,
\]
we conclude that
\[
\frac{\kappa_0}{\sigma^2} \lesssim  \epsilon^{d/2}, \quad \quad \quad \frac{\kappa_3}{\sigma^2} \lesssim  \epsilon^{d/2} \log|\epsilon|.
\]
Proposition \ref{p:gauss} now follows immediately from Chatterjee's result. 
}
\vspace{0.2in}

\begin{proof}[Proof of Lemma \ref{lem:xesum}] If $|e| > 2 \epsilon^{-1}$ and $|x| \leq \epsilon^{-1}$, then $|x - e| > |e|/2 \geq 1$. So, clearly
\[
\sum_{|x| \leq \epsilon^{-1}} \frac{1}{(1 + |x - e|)^{d - 1}} \lesssim \frac{\epsilon^{-d}}{|e|^{d-1}} = \frac{\epsilon^{-1}}{|\epsilon e|^{d-1}} 
\]
in this case. For $|e| < 2 \epsilon^{-1}$, 
\[
 \sum_{|x| \leq \epsilon^{-1}} \frac{1}{(1 + |x - e|)^{d - 1}} \leq \int_{|y| < 3 \epsilon^{-1}} \frac{1}{(1 + |y|)^{d-1}} \,dy \lesssim \epsilon^{-1}.
\]
So, (\ref{xesum}) holds in this case, as well. 
\end{proof}

\vspace{0.2in}

\begin{proof}[Proof of Lemma \ref{lem:eepsum}] First, consider the sum over edges satisfying $|e| < \epsilon^{-1}$:
\br
\sum_{|e| < \epsilon^{-1}} \frac{1}{(1 + |e - e'|)^{d}} \frac{1}{(1 + |\epsilon e|)^p} & \lesssim & \sum_{|e| < \epsilon^{-1}} \frac{1}{(1 + |e - e'|)^{d}} \no \\
& \lesssim & \left \{\begin{array}{ll} |\log \epsilon|, & \forall\;e' \\ \frac{\epsilon^{-d}}{|e'|^d}, & \forall\; |e'| > \epsilon^{-1} \end{array} \right. \no \\
& \lesssim & \frac{|\log \epsilon|}{(1 + |\epsilon e'|)^d}.
\er
Next, consider the sum over edges satisfying $|e| > \epsilon^{-1}$:

\br
\sum_{|e| \geq \epsilon^{-1}} \frac{1}{(1 + |e - e'|)^{d}} \frac{1}{(1 + |\epsilon e|)^p} & \lesssim & \sum_{|e| \geq \epsilon^{-1}} \frac{1}{(1 + |e - e'|)^{d}} \frac{1}{|\epsilon e|^p}\no \\
& \lesssim & \epsilon^{-p} \int_{|x| > \epsilon^{-1}} \frac{1}{(1 + |x - e'|)^d} \frac{1}{|x|^p} \,dx.
\er
Let $x = \epsilon^{-1} z$, $e' = \epsilon^{-1} w$. Then
\br
\sum_{|e| \geq \epsilon^{-1}} \frac{1}{(1 + |e - e'|)^{d}} \frac{1}{(1 + |\epsilon e|)^p} & \lesssim &  \epsilon^{-d} \int_{|z| > 1} \frac{1}{(1 + \epsilon^{-d} |z - w|^d)} \frac{1}{|z|^p} \,dz. \label{xdpint}
\er
Restricting the integral in (\ref{xdpint}) to the set $|z - w| \leq \epsilon$, we have
\[ 
\epsilon^{-d} \int_{\substack{|z| > 1 \\ |z - w| \leq \epsilon}} \frac{1}{(1 + \epsilon^{-d} |z - w|^d)} \frac{1}{|z|^p} \,dz \lesssim  \epsilon^{-d} \int_{\substack{|z| > 1 \\ |z - w| \leq \epsilon}}  \frac{1}{|z|^p} \,dz \lesssim \min(1,|w|^{-p}).
\]
Restricting the integral in (\ref{xdpint}) to the set $|z - w| \geq \epsilon$, we have
\[ 
\epsilon^{-d} \int_{\substack{|z| > 1 \\ |z - w| \geq \epsilon}} \frac{1}{(1 + \epsilon^{-d} |z - w|^d)} \frac{1}{|z|^p} \,dz \lesssim   \int_{\substack{|z| > 1 \\ |z - w| \geq \epsilon}} \frac{1}{ |z - w|^d} \frac{1}{|z|^p} \,dz .
\]
If $|z| > 3|w|$, then $|w - z| > |z|/2$, so
\[
 \int_{\substack{|z| > 1 \\ |z - w| \geq \epsilon \\ |z| > 3|w|}} \frac{1}{ |z - w|^d} \frac{1}{|z|^p} \,dz \leq   \int_{\substack{|z| > 1 \\ |z| > 3|w|}} \frac{1}{|z|^{p+d}} \,dz \lesssim \min(1,|w|^{-p}).
\]
On the other hand, restricting to $|z| < 3|w|$, 
\br
\int_{\substack{|z| > 1 \\ |z - w| \geq \epsilon \\ |z| < 3|w|}} \frac{1}{ |z - w|^d} \frac{1}{|z|^p} \,dz & = & \int_{\substack{|z| > 1 \\ \epsilon \leq |z - w| \leq 1 \\ |z| < 3|w|}} \frac{1}{ |z - w|^d} \frac{1}{|z|^p} \,dz + \int_{\substack{|z| > 1 \\ |z - w| \geq 1 \\ |z| < 3|w|}} \frac{1}{ |z - w|^d} \frac{1}{|z|^p} \,dz\no \\
& \lesssim &  |\log \epsilon|  \min(1,|w|^{-p}) + \log (1 + |w|)\min(1,|w|^{-p}).
\er
The bound (\ref{eepsum}) now follows by combining these estimates with $w = \epsilon e'$. 
\end{proof}

\section{Proof of Proposition \ref{p:tight}}  \label{sec:tight}

By \cite{besov}, it suffices to show the following result.

\begin{prop} \label{prop:moments} 
For $f \in  C^0_c(\Rm^d;\Rm)$, let $f_\lambda(x) = \lambda^{-d} f(x/\lambda)$. 
For all $p \ge 1$, there exists a constant $C = C(p,f)$ such that for all $\epsilon, \lambda \in (0,1]$,
\[
 \avg{|\Phi_\epsilon(f_\lambda)|^p}^{1/p} \leq C \lambda^{1 - \frac{d}{2}} .
\]
\end{prop}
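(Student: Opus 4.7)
Write $F = \Phi_\eps(f_\lambda)$; by stationarity of $\phi$ we have $\avg{F} = 0$. The plan is to combine the gradient estimate \eqref{dephi} with an $L^p$ concentration inequality for functionals of the Gaussian family $(\zeta_e)$. Since these are i.i.d.\ $N(0,1)$ variables, the Gaussian log-Sobolev inequality yields the $L^p$ Poincar\'e-type bound
\begin{equation}
\bigavg{|F|^p}^{1/p} \leq C_p \bigavg{\Bigl(\textstyle\sum_e |\partial_e F|^2\Bigr)^{p/2}}^{1/p} \leq C_p \Bigl(\textstyle\sum_e \bigavg{|\partial_e F|^p}^{2/p}\Bigr)^{1/2}, \label{planLpP}
\end{equation}
valid for every $p \ge 2$, where the second inequality is Minkowski's inequality applied in $L^{p/2}$. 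By the definition \eqref{e:def:phieps}, one more application of Minkowski bounds $\avg{|\partial_e F|^p}^{1/p}$ by $\epsilon^{d/2+1} \sum_x |f_\lambda(\epsilon x)| \avg{|\partial_e \phi(x)|^p}^{1/p}$, so \eqref{dephi} reduces the problem to a deterministic convolution estimate.

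For this deterministic step I would proceed as in the proof of Lemma~\ref{lem:xesum}. Using $\normLinf{f_\lambda} \lesssim \lambda^{-d}$ and $\mathrm{supp}(f_\lambda) \subset B(0,\lambda)$, and splitting according to whether $|e| \le 2\lambda/\epsilon$ or $|e| > 2\lambda/\epsilon$, one obtains
\[
\bigavg{|\partial_e F|^p}^{1/p} \lesssim \epsilon^{d/2}\,\lambda^{1-d}\,\bigl(1 + \epsilon|e|/\lambda\bigr)^{-(d-1)}.
\]
Squaring and summing over $e$ (and using that $2(d-1) > d$ since $d \ge 3$), this yields
\[
\sum_e \bigavg{|\partial_e F|^p}^{2/p} \lesssim \epsilon^d \lambda^{2-2d}\,(\lambda/\epsilon)^d = \lambda^{2-d}.
\]
Inserting this into \eqref{planLpP} gives $\avg{|F|^p}^{1/p} \lesssim \lambda^{1-d/2}$ for $p \ge 2$, and the case $p \in [1,2)$ follows by Jensen's inequality.

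The main obstacle is the justification of \eqref{planLpP} in this discrete, infinite-dimensional setting. The $p=2$ case of the proposition can in fact be read off directly from \eqref{e:struct} by rescaling, so only the higher-moment version requires work. For that, one needs both a suitable functional-analytic statement of the log-Sobolev inequality for functionals of countably many Gaussians, and sufficient smoothness of $\phi$ in the sense of the weak derivatives $\partial_e \phi$. Both points are available: one can first apply \eqref{planLpP} to the analogous functional built from the regularized corrector $\phi_\mu$, for which differentiability and the required gradient bounds are established in \cite{glotto,MO1}, obtain the estimate uniformly in $\mu$ via the computation above, and then pass to the limit $\mu \to 0$ as discussed in Remark~4.8 of \cite{correl}.
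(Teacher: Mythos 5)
Your argument is correct in substance and ends at the same deterministic estimate as the paper, but the probabilistic input is genuinely different. The paper first reduces to $\lambda=1$ via the exact identity $\Phi_\epsilon(f_\lambda)=\lambda^{1-d/2}\Phi_{\epsilon/\lambda}(f)$, and then proves the uniform bound \eqref{pmoment} by induction on dyadic moments: writing $\avg{F^{2n}}=\avg{F^n}^2+\var(F^n)$, applying the $L^2$ spectral gap inequality to $F^n$ together with H\"older to get $\var(F^n)\lesssim\avg{F^{2n}}^{1-1/n}\sum_e\avg{|\partial_eF|^{2n}}^{1/n}$ as in \eqref{varnupper}, and closing with Young's inequality. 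That bootstrap is, in effect, a self-contained proof of precisely the $L^p$ Poincar\'e-type bound you propose to import (in the slightly weaker form with $\sum_e\avg{|\partial_eF|^{2n}}^{1/n}$ on the right, which is what your Minkowski step produces anyway); after that point the two arguments coincide, both reducing to the sum $\sum_e\avg{|\partial_e\Phi_\epsilon(f_\lambda)|^{p}}^{2/p}$ controlled by \eqref{dephi} and (a rescaled) Lemma~\ref{lem:xesum}, and your computation $\sum_e(\cdots)\lesssim\lambda^{2-d}$ matches the paper's after undoing its scaling reduction. What your route buys is brevity and a clean statement valid for all $p\ge2$ at once, at the cost of invoking the $L^p$ Gaussian Poincar\'e inequality (Pisier's inequality, or hypercontractivity from log-Sobolev) as a black box and of the approximation through $\phi_\mu$ that you rightly flag as the main obstacle; the paper's route is longer but needs only the $L^2$ spectral gap already used elsewhere (e.g.\ in the proof of Proposition~\ref{prop:stein}) and no tool beyond what is required to define $\partial_e\phi$. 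One shared caveat: your pointwise bound on $\avg{|\partial_e F|^p}^{1/p}$ and the final count $\sum_e(1+\epsilon|e|/\lambda)^{-2(d-1)}\asymp(\lambda/\epsilon)^d$ implicitly assume $\lambda\gtrsim\epsilon$, and the paper's reduction has the same blind spot (its assertion that \eqref{pmoment} holds for all $\epsilon>0$ degenerates when $\epsilon/\lambda\gg1$ and only the site $x=0$ contributes), so you are no worse off than the paper, but neither argument covers the regime $\lambda\ll\epsilon$ as literally stated.
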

\begin{proof}
Observe that for any $\epsilon, \lambda \in (0,1]$,
\[
\Phi_\epsilon(f_\lambda) = \lambda^{ 1 - \frac{d}{2}} \Phi_r(f) 
\]
with $r = \epsilon/\lambda$.  Therefore, to prove Proposition \ref{prop:moments}, it suffices to show that
\be
 \avg{|\Phi_\epsilon(f)|^p}^{1/p} \leq C  \label{pmoment}
\ee
holds for all $\epsilon > 0$.

Since $\phi$ is stationary and $\avg{\phi(0)} = 0$, we know that $\avg{ \Phi_\epsilon(f)} = 0$. In particular,
\be
\avg{ \Phi_\epsilon(f)^{p} }^2 \lesssim 1  \quad  \text{ uniformly over }\epsilon > 0 \label{induct1}
\ee
holds for $p = 1$. Arguing inductively, let us suppose that (\ref{induct1}) holds for some positive integer $p = n$. We claim that (\ref{induct1}) must also hold for $p = 2n$.  To prove this claim, we use the identity
\[
\avg{ \Phi_\epsilon(f)^{2n} } = \avg{ \Phi_\epsilon(f)^{n} }^{2} + \var(\Phi_\epsilon(f)^n),
\]
which, by the induction hypothesis, gives us
\be
\avg{ \Phi_\epsilon(f)^{2n} } \lesssim 1 + \var(\Phi_\epsilon(f)^n). \label{induct2}
\ee
We will show that $\var(\Phi_\epsilon(f)^n) \lesssim \avg{ \Phi_\epsilon(f)^{2n} }^{1 - 1/n}$, hence
\be
\avg{ \Phi_\epsilon(f)^{2n} } \lesssim 1 + \avg{ \Phi_\epsilon(f)^{2n} }^{1 - 1/n}. \label{induct3}
\ee
This bound and Young's inequality establish the claim that (\ref{induct1}) also holds for $p = 2n$.

To prove that $\var(\Phi_\epsilon(f)^n) \lesssim \avg{ \Phi_\epsilon(f)^{2n} }^{1 - 1/n}$, we apply the spectral gap inequality and H\"older's inequality:
\br
\var(\Phi_\epsilon(f)^n) & \lesssim & \sum_{e} \avg{| \partial_e \Phi_\epsilon(f)^n |^2} \no \\
& \lesssim & \sum_{e} \avg{| \Phi_\epsilon(f)^{n-1}  \partial_e \Phi_\epsilon(f) |^2}  \no \\
& \leq & \sum_{e} \avg{ \Phi_\epsilon(f)^{2n}}^{1 - 1/n}   \avg{|\partial_e \Phi_\epsilon(f)|^{2n}}^{1/n} \no \\
& = &  \avg{ \Phi_\epsilon(f)^{2n}}^{1 - 1/n} \sum_{e}   \avg{|\partial_e \Phi_\epsilon(f)|^{2n}}^{1/n}. \label{varnupper}
\er
Now we show that the last sum is bounded by a constant. Without loss of generality, assume that $f(x) = 0$ for $|x| > 1$. Recall that $\bigavg{ |\partial_e \phi(x)|^p }^{1/p}  \lesssim (1 + |x - e|)^{1 - d}$ (see (\ref{dephi})). So,
\br
 \bigavg{ \left( \partial_e \Phi(f) \right)^{2n} }^{1/(2n)}  & = & \epsilon^{\frac{d}{2} + 1}    \bigavg{  \left(\sum_{x} f(\epsilon x) \partial_e \phi(x) \right)^{2n}}^{1/(2n)} \no \\
& \lesssim & \epsilon^{\frac{d}{2} + 1}  \sum_{|x| \le \epsilon^{-1} }  \avg{ \left(\partial_e \phi(x) \right)^{2n} }^{1/(2n)} \no \\
& \lesssim & \epsilon^{\frac{d}{2} + 1}  \sum_{|x| \le \epsilon^{-1} } \frac{1}{(1 + |x - e|)^{d - 1}}. \no 
\er
The last sum is controlled by Lemma \ref{lem:xesum}, which leads to
\[
\bigavg{ \left( \partial_e \Phi(f) \right)^{2n} }^{1/(2n)}  \leq \epsilon^{\frac{d}{2} + 1} \frac{\epsilon^{-1}}{(1 + |\epsilon e|)^{d-1}}.
\]
Therefore,
\br
\sum_{e}   \avg{|\partial_e \Phi_\epsilon(f)|^{2n}}^{1/n} \lesssim \epsilon^{d + 2} \sum_e  \frac{\epsilon^{-2}}{(1 + |\epsilon e|)^{2d-2}} \lesssim 1
\er
since $d \geq 3$. In view (\ref{varnupper}), we have now established that $\var(\Phi_\epsilon(f)^n) \lesssim \avg{ \Phi_\epsilon(f)^{2n} }^{1 - 1/n}$. 

By induction on $n$, we have proved that
\[
\avg{ \Phi_\epsilon(f)^{p} }^2 \lesssim 1
\]
holds for all $p \in \{ 2^n \;|\; n = 0,1,2,3, \dots \}$. Now (\ref{pmoment}) follows by Jensen's inequality.
\end{proof}

\section{Stein's method} \label{sec:stein}

In this section, we prove Proposition \ref{prop:stein}, which is a version of Theorem~2.2 in \cite{Ch3} and Theorem~3.1 (and Remark 3.6) in \cite{NP1}, stated in a form that is convenient for our purpose. The basis of the estimate is the following lemma:

\begin{lem} [See \cite{Chat1}, Lemma 4.2] \label{lem:stein}
Suppose $h:\Rm \to \Rm$ is absolutely continuous with bounded derivative, and $Y \sim N(0,1)$. There exists a solution to
\be
\phi'(x) - x \phi(x) = h(x) - \avg{h(Y)}, \quad x \in \Rm \label{steineqn}
\ee
which satisfies $\norm{\phi'}_\infty \leq \sqrt{\frac{2}{\pi}} \; \norm{h'}_\infty$ and $\norm{\phi''}_{\infty} \leq 2 \norm{h'}_\infty$. 
\end{lem}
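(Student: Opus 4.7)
The plan is to solve Stein's equation explicitly by the integrating factor method and then derive both sup-norm bounds from the resulting integral representation. Writing $\tilde h := h - \avg{h(Y)}$, equation (\ref{steineqn}) becomes $(e^{-x^2/2}\phi(x))' = e^{-x^2/2}\tilde h(x)$, and the zero-mean condition $\int_\R \tilde h(t) e^{-t^2/2}\,\d t = 0$ guarantees that
$$
\phi(x) = e^{x^2/2}\int_{-\infty}^x \tilde h(t)\,e^{-t^2/2}\,\d t = -e^{x^2/2}\int_x^\infty \tilde h(t)\,e^{-t^2/2}\,\d t
$$
is bounded on $\R$ and solves (\ref{steineqn}).

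For the bound on $\phi'$, the key is a centering identity for $\tilde h$. Write $\varphi,\Phi$ for the standard normal density and cumulative distribution. The identity $\tilde h(y) = \int_\R (h(y)-h(z))\varphi(z)\,\d z$, combined with $h(y)-h(z) = \int_z^y h'(u)\,\d u$ and Fubini, yields
$$
\tilde h(y) = \int_{-\infty}^y h'(u)\,\Phi(u)\,\d u - \int_y^\infty h'(u)(1-\Phi(u))\,\d u.
$$
Using $\phi'(x) = x\phi(x) + \tilde h(x)$ together with the integral representation of $\phi$, a second Fubini expresses $\phi'(x) = \int_\R h'(u)\,K(x,u)\,\d u$ for an explicit signed kernel $K$. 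One then checks that $x \mapsto \int_\R |K(x,u)|\,\d u$ is maximized at $x=0$, where a direct computation gives $\avg{|Y|} = \sqrt{2/\pi}$; this yields the bound on $\|\phi'\|_\infty$.

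For the bound on $\phi''$, differentiating Stein's equation gives $\phi''(x) = (1+x^2)\phi(x) + x\tilde h(x) + h'(x)$. Applying the same Fubini manipulation to $(1+x^2)\phi(x) + x\tilde h(x)$ rewrites it as an integral against $h'$ whose kernel has $L^1$-norm in $u$ bounded by $1$ uniformly in $x$, producing $\|\phi''\|_\infty \leq 2\|h'\|_\infty$. The factor $2$ arises naturally from combining the $(-\infty,x)$ and $(x,\infty)$ contributions, each separately controlled by $\|h'\|_\infty$.

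The main obstacle is purely computational: the careful kernel book-keeping in the second step that identifies the sharp constants $\sqrt{2/\pi}$ and $2$. These values are classical in Stein's method, and the argument follows the proof of Lemma~4.2 in \cite{Chat1} directly, which we would simply reproduce.
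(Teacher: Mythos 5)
The paper offers no proof of this lemma at all---it is quoted directly from Lemma~4.2 of \cite{Chat1}---and your sketch (integrating-factor solution, centering identity for $\tilde h$ via $\Phi$ and $1-\Phi$, Fubini to express $\phi'$ and $\phi''$ as kernels integrated against $h'$, and the $L^1$ kernel bounds yielding $\avg{|Y|}=\sqrt{2/\pi}$ and $2$) is precisely the classical argument behind that citation, correct in outline. Since you, like the paper, ultimately defer the sharp-constant bookkeeping to the cited reference, the two treatments coincide and there is nothing further to check.
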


\begin{proof}[Proof of Proposition \ref{prop:stein}]

Let $F \in L^2(\Omega)$ be such that $\avg{F} = 0$, $\avg{F^2} = 1$. By (\ref{steineqn}), we have
\br
\left \langle  h(F) - h(Y) \right \rangle & = & \left\langle \phi'(F) - \phi(F) F \right \rangle \no \\
& = & \text{Cov}( \avg{\phi'(F)} F - \phi(F) , F). \no
\er
Since $\partial_e F \in L^2(\Omega)$ for all $e \in \mathbb{B}$, we can estimate this covariance by applying the Helffer-Sj\"ostrand correlation representation (see \cite{correl}, Prop.\ 3.1) to $\text{Cov}(G,F)$ with $G =  \avg{\phi'(F)} F - \phi(F)$. Let $\partial_e^* = - \partial_e + \zeta_e$ be the adjoint of the derivative operator $\partial_e$.  Let $\mathscr{L} = \partial^* \partial$, where $\partial F = (\partial_e F)_{e \in \mathbb{B}}$ and for $K = (K_e)_{e \in \mathbb{B}}$, $\partial^* K = \sum_e \partial_e^* K_e$.  From the correlation representation, we obtain
\br
\avg{h(F) - h(Y)} & = & \sum_{e} \bigavg{ \partial_e G (\mathscr{L} + 1)^{-1} \partial_e F} \no \\
& = & \sum_{e} \bigavg{ \left( \avg{\phi'(F)} - \phi'(F) \right)\partial_e F (\mathscr{L} + 1)^{-1} \partial_e F}.
\er
Observe that $\avg{\phi'(F)} - \phi'(F) \in L^\infty$ has zero mean. Moreover, the series $\sum_e \partial_e F (\mathscr{L} + 1)^{-1} \partial_e F$ converges in $L^1(\Omega)$ and has mean $1 = \text{Cov}(F,F)$. Hence
\br
\avg{h(F) - h(Y)} & = &  \bigavg{ \left( \avg{\phi'(F)} - \phi'(F) \right)\left(\sum_{e} \partial_e F (\mathscr{L} + 1)^{-1} \partial_e F \right) } \no \\
& = &  \bigavg{ \phi'(F) \left(1 - \sum_{e} \partial_e F (\mathscr{L} + 1)^{-1} \partial_e F \right) } \no \\
& \leq & \norm{\phi'}_\infty \bigavg{ \left|1 - \sum_{e} \partial_e F(\mathscr{L} + 1)^{-1} \partial_e F  \right| } \no \\
& \leq & \sqrt{\frac{2}{\pi}} \norm{h'}_\infty \bigavg{ \left|1 - \sum_{e} \partial_e F (\mathscr{L} + 1)^{-1} \partial_e F \right| }. \label{L1sum}
\er

Assuming that $\sum_{e} \avg{|\partial_e F|^4}^{1/2} < \infty$, we claim that the sum 
\[
S = \sum_{e} \partial_e F (\mathscr{L} + 1)^{-1} \partial_e F
\]
is in $L^2(\Omega)$, and we will estimate the last term in (\ref{L1sum}) by the variance
\be
\bigavg{ \left|1 - \sum_{e} \partial_e F (\mathscr{L} + 1)^{-1} \partial_e F  \right| } \leq \sqrt{\var \left(\sum_{e} \partial_e F (\mathscr{L} + 1)^{-1} \partial_e F \right)}. \label{Gspectral}
\ee
Indeed,
\[
\norm{ \partial_e F (\mathscr{L} + 1)^{-1} \partial_e F}_2 \leq \norm{ \partial_e F}_4 \norm{ (\mathscr{L} + 1)^{-1} \partial_e F}_4 \leq \norm{ \partial_e F}_4^2,  
\]
since $(\mathscr{L} + 1)^{-1}$ is a contraction on $L^p(\Omega)$ for any $p \geq 2$ (see Prop.\ 3.2 of \cite{correl}). Thus, $S \in L^2(\Omega)$ if $\sum_{e} \norm{ \partial_e F}_4^2 < \infty$.

Under the additional assumption $\avg{|\partial_{e'} \partial_e F|^4} < \infty$ for all $e,e' \in \mathbb{B}$, we also have $\partial_{e'} S \in L^2(\Omega)$ for all $e' \in \mathbb{B}$, so that $\var(S)$ can be estimated by the Gaussian spectral gap inequality. We compute: 
\[
\partial_{e'}(\mathscr{L} + 1)^{-1} \partial_e F = (\mathscr{L} + 2)^{-1} \partial_{e'} \partial_e F,
\]
so that
\[
\partial_{e'} (\partial_e F (\mathscr{L} + 1)^{-1} \partial_e F) = (\partial_{e'} \partial_e F) (\mathscr{L} + 1)^{-1} \partial_e F + \partial_e F (\mathscr{L} + 2)^{-1} \partial_{e'} \partial_e F.
\]
The operator $(\mathscr{L} + 2)^{-1}$ is also a contraction, satisfying $\norm{(\mathscr{L} + 2)^{-1} u}_p \leq (1/2) \norm{u}_p$ for all $u \in L^p(\Omega)$, $p \geq 2$ (this follows from minor modification to the proof of Prop.\ 3.2 of \cite{correl}). Therefore, 
\br
\norm{ \partial_{e'} (\partial_e F (\mathscr{L} + 1)^{-1} \partial_e F)}_2 & \leq & \norm{(\partial_{e'} \partial_e F) (\mathscr{L} + 1)^{-1} \partial_e F}_2 + \norm{\partial_e F (\mathscr{L} + 2)^{-1} \partial_{e'} \partial_e F}_2 \no \\
& \leq & \norm{\partial_{e'} \partial_e F}_4 \norm{ (\mathscr{L} + 1)^{-1} \partial_e F}_4 + \norm{\partial_e F}_4 \norm{(\mathscr{L} + 2)^{-1} \partial_{e'} \partial_e F}_4 \no \\
& \leq & (1 + \frac{1}{2}) \norm{\partial_{e'} \partial_e F}_4 \norm{  \partial_e F}_4 < \infty.
\label{e:CS}
\er
Hence, $\partial_{e'} S \in L^2(\Omega)$ for all $e' \in \mathbb{B}$.

Now we apply the Gaussian spectral gap inequality:
\br
\var\left(\sum_{e} \partial_e F (\mathscr{L} + 1)^{-1} \partial_e F \right) & \leq & \sum_{e'} \left \langle \left| \partial_{e'} \sum_e \partial_e F (\mathscr{L} + 1)^{-1} \partial_e F \right|^2  \right \rangle \no \\
& \leq & 2 \sum_{e'} \left \langle \left|  \sum_e (\partial_{e'} \partial_e F) (\mathscr{L} + 1)^{-1} \partial_e F \right|^2 \right \rangle \no \\
& & + 2 \sum_{e'} \left \langle \left|  \sum_e \partial_e F (\mathscr{L} + 2)^{-1} \partial_{e'} \partial_e F \right|^2 \right \rangle. \label{varsum}
\er
To bound the last two sums, we apply Minkowski's inequality:
\br
 \left \langle \left|  \sum_e (\partial_{e'} \partial_e F) (\mathscr{L} + 1)^{-1} \partial_e F \right|^2 \right \rangle^{1/2} & \leq &  \sum_e  \left \langle \left| (\partial_{e'} \partial_e F) (\mathscr{L} + 1)^{-1} \partial_e F \right|^2 \right \rangle^{1/2}, \no \\
 \left \langle \left|  \sum_e (\partial_{e'} \partial_e F) (\mathscr{L} + 1)^{-1} \partial_e F \right|^2 \right \rangle^{1/2} & \leq &  \sum_e  \left \langle \left| \partial_{e}F (\mathscr{L} + 2)^{-1} \partial_{e'} \partial_e F \right|^2 \right \rangle^{1/2}. \no
\er
As was seen in \eqref{e:CS}, we have
\br
\left \langle \left| (\partial_{e'} \partial_e F) (\mathscr{L} + 1)^{-1} \partial_e F \right|^2 \right \rangle^{1/2} 
 & \leq & \left \langle | \partial_{e'} \partial_e F|^4 \right \rangle^{1/4} \left \langle |\partial_e F |^4 \right \rangle^{1/4}, \no \\
\left \langle \left| \partial_{e}F (\mathscr{L} + 2)^{-1} \partial_{e'} \partial_e F \right|^2 \right \rangle^{1/2} 
 & \leq &  \frac{1}{2} \left \langle |\partial_e F |^4 \right \rangle^{1/4}  \left \langle | \partial_{e'} \partial_e F|^4 \right \rangle^{1/4}.
\er
Therefore, returning to (\ref{varsum}), we obtain that
\br
\var\left(\sum_{e} \partial_e F (\mathscr{L} + 1)^{-1} \partial_e F \right) \leq \frac{5}{2} \sum_{e'} \left( \sum_{e} \left \langle |\partial_e F |^4 \right \rangle^{1/4} \left \langle | \partial_{e'} \partial_e F|^4 \right \rangle^{1/4} \right)^2 , \no
\er
and thus from (\ref{L1sum}) and (\ref{Gspectral}) we conclude
\be
\avg{h(F) - h(Y)}  \leq  \sqrt{\frac{5}{\pi}} \norm{h'}_\infty \sqrt{\sum_{e'} \left( \sum_{e} \left \langle |\partial_e F |^4 \right \rangle^{1/4} \left \langle | \partial_{e'} \partial_e F|^4 \right \rangle^{1/4} \right)^2}. \label{steinsum2}
\ee
(The assumptions on $F$ in Proposition \ref{prop:stein} do not guarantee that the right side of (\ref{steinsum2}) is finite. In the case that the right side is infinite, the conclusion of the theorem holds trivially.)
\end{proof}


\section*{Acknowledgements}
JN acknowledges financial support from US National Science Foundation grants DMS-1007572 and DMS-1351653.

\end{document}